\documentclass[ a4paper
              , UKenglish
							, cleveref
							, autoref
							, thm-restate
							]{lipics-v2021}

\usepackage{fouche-sty/lipics-fouche}
\usepackage{eucal, pifont}
\title{Rosen's no-go theorem for regular categories}
\titlerunning{Rosen's no-go theorem}
\authorrunning{F. Loregian}

\newtheorem{perspective}[theorem]{Perspective}

\begin{CCSXML}
	<ccs2012>
		 <concept>
				 <concept_id>10010583.10010786.10010792</concept_id>
				 <concept_desc>Hardware~Biology-related information processing</concept_desc>
				 <concept_significance>500</concept_significance>
				 </concept>
		 <concept>
				 <concept_id>10010405.10010444.10010095</concept_id>
				 <concept_desc>Applied computing~Systems biology</concept_desc>
				 <concept_significance>500</concept_significance>
				 </concept>
	 </ccs2012>
	\end{CCSXML}

	\ccsdesc[500]{Hardware~Biology-related information processing}
	\ccsdesc[500]{Applied computing~Systems biology}

	\keywords{analytic, synthetic, comma category, regular epimorphism}
	\Copyright{F. Loregian}

% \address{
%   Fosco Loregian\newline
%   Tallinn University of Technology,\newline %
%   Institute of Cybernetics, Akadeemia tee 15/2, \newline %
%   12618 Tallinn, Estonia \newline
%   \url{fosco.loregian@taltech.ee}}
\author
  {Fosco Loregian}
	{Tallinn University of Technology, Estonia \and \url{http://tetrapharmakon.github.io}}
	{fosco.loregian@gmail.com}
	{https://orcid.org/0000-0003-3052-465X}
	{ESF funded Estonian IT Academy research measure (project 2014-2020.4.05.19-0001)}

\def\An{\cate{An}_A}
\def\rAn{\cate{rAn}_A}

\def\name#1{\lceil #1 \rceil}
\newcommand{\comma}[2]{(#1 \downarrow #2)}

\def\bsm{\left[\begin{smallmatrix}}
  \def\esm{\end{smallmatrix}\right]}

  \def\A{\bsm (0,b) &\to& \textcolor{red}{(1,b)} \\ && \\ \textcolor{red}{(0,a)} &\to& (1,a) \esm}
  \def\B{\bsm 0 &\to& 1 \\ &\searrow& \downarrow \\ && 2 \esm}
  \def\C{\{0\to 2\}}
  \def\D{\{0,   2\}}

\nolinenumbers
\hideLIPIcs
\def\Pos{\cate{Pos}}
\begin{document}
\maketitle
\begin{abstract}
  The famous biologist Robert Rosen argued for an intrinsic difference between biological and artificial life, supporting the claim that `living systems are not mechanisms'. This result --understood as the claim that life-like mechanisms are non-computable-- can be phrased as the non-existence of an equivalence between a category of `static'/analytic elements and a category of `variable'/synthetic elements.
  The property of a system of being synthetic, understood as being the glueing of `variable families' of \emph{analytica}, must imply that the latter class of objects does not retain sufficient information to describe said variability; we contribute to this thesis with an argument rooted in elementary category theory.
  Seen as such, Rosen's `proof' that no living system can be a mechanism arises from a tension between two contrapuntal needs: on one side, the necessity to consider (synthetically) variable families of systems; on the other, the necessity to describe a \emph{syntheticum} via a universally chosen \emph{analyticum}.
\end{abstract}
\maketitle
\section{Analytic and synthetic}
In the words of Rosen \cite{rosen1991life}, an analytic model is a set of \emph{observables} with a given \emph{resolution}. Such `resolution' is specified by a family of equivalence relations induced by functions $h : A \to X$ (each such function induces on $A$ an equivalence relation $\approx_h$ such that $a\approx_h a'$ if and only if $ha=ha'$); we call this `$h$-equivalence', or the equivalence relation `generated' by $h$.

Now, between the lines of \cite{rosen1991life} there is the definition of a \emph{category} of $A$-based analytica, where morphisms are determined by the refinement relation that naturally exists between equivalence relations generated by the various $h :A \to X$.

In the present document, we choose to slightly simplify this approach, considering the usual slice category of sets under $A$ instead, having objects the functions $h : A \to X$, and commutative triangles as its morphisms. In other words:
\begin{definition}[Analytic]\label{anali}
  Let $A$ be a set; the category $\An$ of \emph{functional $A$-based analytica} is the coslice category $A/\Set$.
\end{definition}
The typical morphism $h : \var{A}{X}\to \var{A}{Y}$ of $\An$ is a commutative triangle
\[\label{analy_are_fun}
  \begin{tikzcd}
    & A\ar[dr]\ar[dl]& \\
    X \ar[rr,"h"'] && Y
  \end{tikzcd}
\]
At first, it might seem that Rosen's category has way fewer morphisms than the one defined in \autoref{anali}; on the contrary, our definition is more restrictive, and Rosen's original category $\rAn$, that we choose to dub \emph{relational} analytica, can be recovered from the following observation.
\begin{remark}
  The category $\rAn$ of ($A$-based) relational analytica has objects the pairs
  \[\big( \var[k]{A}{Y},\firstblank\approx_k\firstblank\big),\]
  and there exists a morphism between $\big( \var[k]{A}{Y},\firstblank\approx_k\firstblank\big)$ and $\big( \var[h]{A}{Y},\firstblank\approx_h\firstblank\big)$ when $(\firstblank \approx_k\firstblank)\subseteq (\firstblank \approx_h\firstblank)$; under such assumption, there exists a well\hyp{}defined function between the quotient sets $A/_{\!\approx_k} \to A/_{\!\approx_h}$, sending a class of $\approx_k$-equivalence into its corresponding class of $\approx_h$-equivalence.

  Clearly, the existence of a function $f$ such that $f\circ k=h$ (so, the existence of a morphism in $\An$) ensures $\var[k]{A}{Y}\preceq \var[h]{A}{X}$, so the existence of a morphism in $\rAn$, but this is by no means necessary. Instead, the existence of an implication $(\firstblank \approx_k\firstblank)\subseteq (\firstblank \approx_h\firstblank)$ entails that there exists a \emph{relation} $h : X\pto Y$ closing diagram \eqref{analy_are_fun} (and this justifies the name `relational' analytica). We do not concentrate our attention of $\rAn$ for the moment, but we will expand on its structure at the end of the paper.
\end{remark}
On the other hand, considering fewer morphisms makes assessing global properties about the categories an easier task.

Rosen then proceeds to consider categories of \emph{synthetica}, aiming at modelling variability by indexing a syntheticum over some poset $P$; such a structure can be easily understood as a `comma construction' (cf. \cite[1.6.1]{Bor1}). If $A$ is a set, $\Pos$ is the category of partially ordered sets, and $1$ is the terminal category, we define:
\begin{definition}[Synthetic]\label{def_synth}
  The category $\cate{Syn}_A$ of $A$-synthetic models is the comma category in the diagram
  \[ \vcenter{\xymatrix{
    (\Pos\downarrow \An) \ar[r]\ar[d]_S & 1 \ar[d]^{\name{\An}}\\
    \Pos \ar[r]_J & \cate{Cat}\ultwocell<\omit>{}
    }} \]
  where $\name{\An}$ is the `name' of the category $\An$, i.e. the functor picking out $\An\in\Cat$.
\end{definition}
In simple terms, the category $\cate{Syn}_A$ is a category of ``families of analytic objects varying continuously(=functorially)'' with the elements of a poset $P$. Such families of living systems shall model inherently more complex systems than $\An$, and ultimately harbour ``life'', here understood as systems whose internal structure can't be examined piece-wise (one slice at a time, in the sense of \autoref{side_rem}) but instead \emph{globally}.
\begin{remark}
  Unraveling \autoref{def_synth}, we see that $(\Pos\downarrow \An)$ is the category whose
  \begin{itemize}
    \item objects are functors $f : P \to \An$, regarding a poset $P$ as a category;
    \item morphisms are monotone mappings $h : P \to Q$ such that the triangle of functors
          \[\vcenter{\xymatrix{
                P \ar[rr]^h \ar[dr]_f && Q \ar[dl]^g \\
                & A/\Set &
              }}\] is commutative.
  \end{itemize}
\end{remark}
\begin{remark}\label{side_rem}
  As a side remark, the functor $S$ has all sorts of excellent properties: it is a discrete fibration and each fiber consists of the category of functors $P \to A/\Set$; by the universal property of the comma construction, this category is equivalent to the category whose objects are pairs $(F:P\to \Set, \alpha : \Delta A \To F)$, where $\alpha$ is a cone for $F$ with domain $A$; again for the universal property of the limit of $F$, this amounts to a single function $A\to \lim F$: all in all, this means that we can present the fibres of $S$ as coslice categories on their own.
\end{remark}
\section{Synthetic is not analytic}
The results in \cite{rosen1991life,louie2007living} aim to convey the idea that `living systems are not mechanisms'; that is, any description of
these systems in terms of states would thus be incomplete. Rosen introduces his notion of a living
system in terms of general properties of relational models. Such relational
models defining a minimal organism are proved to be `inconsistent with the assumption that the corresponding system is fully defined by a state description' \cite{chu2006category}. `This also implies that a finite-state machine cannot implement the system. This insight substantiates Rosen's central result in his emph{Life itself}, namely, that there are systems that are not mechanisms, and that in fact nearly all systems fail to be mechanisms.'\footnote{\emph{ibid.}; even though we have taken their word to introduce the section, we must note that Chu and Ho's thesis has been harshly criticised by the researcher that above all carries Rosen's flame, see \cite{louie2007living}, and that our overall claim strays pretty far from \cite{chu2006category}'s thesis and methodology, actually arguing \emph{in favour} of Rosen's claim}

We now aim to phrase this `no-go' theorem in simple categorical terms as the non-existence of an equivalence of categories between analytica and synthetica, as follows:
\begin{theorem}[Rosen no-go theorem: analytic can't be synthetic]\label{nogo}
  There is no equivalence $\cate{Syn}_A\cong \An$.
\end{theorem}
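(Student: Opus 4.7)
My plan is to exhibit a categorical invariant, preserved by equivalences, that holds in $\An$ but fails in $\cate{Syn}_A$. The natural candidate is the existence of a terminal object: in $\An = A/\Set$, the unique map $!\colon A \to \{*\}$ is manifestly terminal, because any $h\colon A \to X$ is dominated downstairs by the unique function $X \to \{*\}$. So it suffices to show that $\cate{Syn}_A$ has no terminal object.

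To do so I would assume a candidate terminal object $(P_0, f_0 \colon P_0 \to \An)$ and probe it in two stages. First, test against the one-point-poset families $(1, X)$ for $X$ ranging over the objects of $\An$: the universal property forces, for every such $X$, a unique $p_0(X) \in P_0$ with $f_0(p_0(X)) = X$, so that $f_0$ bijects $P_0$ onto the class of objects of $\An$. Second, probe with families $(\{0\to 1\}, f)$ out of the two-element chain. The crucial point is that a poset admits at most one arrow between any two elements, whereas $\An$ harbours parallel morphisms in abundance; for instance, taking $X_0 = X_1 = \{0,1\}$ with the structure map $A \to \{0,1\}$ constant at $0$, one finds two distinct endomorphisms in $\An$ (the identity and the constant at $0$). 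Two parallel arrows $m_1 \neq m_2$ yield two functors $f_1 \neq f_2 \colon \{0\to 1\} \to \An$ agreeing on objects, and the object-level bijection extracted above forces both to have the same unique lift $h$ along $f_0$; but then $f_1 = f_0 \circ h = f_2$, a contradiction.

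The main obstacle is conceptual rather than technical: recognising that the right invariant is \emph{having a terminal object}. Once this is isolated, the rest is mechanical verification. Morally, the proof articulates in categorical language the tension the abstract points at: a poset is too thin a shape to witness the intrinsic multiplicity of arrows between analytic observables, so no functorial family $P \to \An$ can be universally absorbing, and $\cate{Syn}_A$ genuinely outstrips $\An$.
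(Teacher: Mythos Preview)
Your argument is correct. Probing a putative terminal object $(P_0,f_0)$ with the one-point families forces $f_0$ to be bijective on objects, and then the thinness of $P_0$ collapses any two parallel arrows of $\An$ with the same source and target; since $\An$ visibly contains non-identity endomorphisms (your example with the constant structure map $A\to\{0,1\}$ works), no such terminal object can exist. The invariant ``having a terminal object'' is preserved by equivalence, and $\An$ has one, so you are done.

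The paper takes a different route: it shows that $\An=A/\Set$ is a \emph{regular} category (coslices of regular categories are regular), while $\cate{Syn}_A=(\Pos\downarrow\An)$ is not, by lifting the standard counterexample to pullback-stability of regular epis from $\Pos$ into the comma category. Your argument is considerably more elementary---it needs only the notion of terminal object and the observation that posets are thin---and it makes transparent exactly where the obstruction lives: a poset-shaped family cannot record the multiplicity of hom-sets in $\An$. The paper's approach, on the other hand, yields a stronger and more uniform statement: $(\Pos\downarrow\clC)$ fails to be regular regardless of $\clC$, hence cannot be equivalent to \emph{any} regular category. This feeds directly into the paper's later discussion of which ingredient (the index $2$-category $\Pos$, the base $\Set$, the comma construction) one might tweak to evade the obstruction, and singles out regularity as the relevant ``tameness'' property that synthetica lack. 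Your invariant would serve equally well for the bare non-equivalence, but would not isolate $\Pos$'s failure of regularity as the culprit.
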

We will deduce this result from a general theorem and a corollary, exploiting an exactness property of categories called \emph{regularity}. The idea behind this proof is that the category of analytica is properly contained into the category of synthetica, but in a wrong way that does not allow for the properties of $\An$ to be faithfully preserved.

\medskip
Recall the definition of regular category from \cite{Bor1}:
\begin{definition}[Regular category]
  A category is called \emph{regular} if it is finitely complete, and if the following two conditions are satisfied:
  \begin{itemize}
    \item considering the pullback of an arrow $f : X \to Y$ along itself,
          \[\begin{tikzcd}
              X\times_Y X \pb \ar[r, "p_0"]\ar[d, "p_1"'] & X\ar[d, "f"] \\
              X \ar[r, "f"']& Y
            \end{tikzcd}\]
          the pair $(p_0,p_1)$ has a coequaliser:
          \[
            \begin{tikzcd}
              X \times_Y X \ar[shift left=.5em, r, "p_0"] \ar[shift right=.5em, r, "p_1"']& X \ar[r] & Q
            \end{tikzcd}
          \]
    \item the class of regular epimorphisms (cf. \cite[4.3.1]{Bor1}) is stable under pullback; this means that if $u : A \to B$ is a regular epimorphism, and the square
          \[
            \begin{tikzcd}
              A'\ar[r]\ar[d, "u'"']\pb & A\ar[d, "u"] \\
              X \ar[r]& B
            \end{tikzcd}
          \]
          is a pullback, then $u'$ is still a regular epimorphism.
  \end{itemize}
\end{definition}
  The category of sets is easily seen to be regular; if a category is regular, so is the slice category of arrows $\clC/X$ and the coslice category $Y/\clC$ (\cite[A.5.5]{bourn}).

	Notorious examples of non-regular categories are: partially ordered sets, categories, and topological spaces. Quite niftily, a single counterexample fits all three cases, when interpreted as a poset, a category, or as finite topological space.
\begin{example}\label{alcune_no}
  let $A$ be the poset $\{a, b\} \times (0 \to 1)$; let $B$ be the poset $(0 \to 1 \to 2)$, and let $C$ be the poset $(0 \to 2)$. There is a regular epimorphism $p: A \to B$ obtained by identifying $(a, 1)$ with $(b, 0)$, and there is the evident inclusion $i: C \to B$. The pullback of $p$ along $i$ is the inclusion $\{0, 2\} \to (0 \to 2)$, which is certainly an epimorphism but not a regular epi. Hence regular epimorphisms in $Pos$ are not stable under pullback
  \[ \begin{tikzcd}
      \D \ar[r]\ar[d]\pb & \A\ar[d, two heads, "(\textcolor{red}{(0,a)}\equiv \textcolor{red}{(1,b)})"] \\
      \C \ar[r] & \B.
    \end{tikzcd}
  \]
  Interpreting posets as suitable categories, the same example works for $\Cat$, and also for preorders. On the other hand, the category of finite preorders is equivalent to the category of finite topological spaces, so this example shows that $\cate{Top}$ is not regular.
\end{example}
\begin{lemma}
  The comma category $\comma{J}{\clC}$ isn't regular. So, if $\clC$ is a regular category, the two can't be isomorphic.
\end{lemma}
\begin{proof}
  We show that regular epimorphisms are not pullback-stable because they are not pullback stable in $\Pos$. For this to work, a number of sanity checks are in order:
  \begin{enumtag}{c}
    \item \label{c_1} a regular epimorphism in $\Pos$ remains regular in $\comma{J}{\clC}$;
    \item \label{c_2} a pullback in $\comma{J}{\clC}$ is computed as in $\clC$;
  \end{enumtag}
  Once this has been verified, take a regular epimorphism $u : P \to Q$ in $\Pos$ whose pullback $u'$ along a map $q$ fails to be regular; now $u'$ is an epimorphism in $\comma{J}{\clC}$, but not regular, and it fits into a pullback diagram
  \[
    \begin{tikzcd}
      P' \ar[d, "u'"']\ar[r]\pb & P\ar[ddr, bend left]\ar[d, "u"] \\
      R \ar[r,"q"']\ar[drr, bend right]& Q\ar[dr] \\
      &&\clC
    \end{tikzcd}
  \]
  in $\comma{J}{\clC}$; thus, in $\comma{J}{\clC}$ the class of regular epimorphisms is not stable under pullback: so, $\comma{J}{\clC}$ is not regular.

  Verifying \ref{c_1} and \ref{c_2} is almost immediate: a regular epimorphism $u : P\to Q$ is an arrow that appears in a coequaliser
  \[ \vcenter{\xymatrix{
        A \ar@<.5em>[r]^f\ar@<-.5em>[r]_g & P \ar[r]^u & Q
      }} \]
  When $u,f,g$ are morphisms of $\comma{J}{\clC}$, the same diagram remains a coequaliser in $\comma{J}{\clC}$ when every vertex is endowed with a diagram to $\clC$. Now, a similar argument shows that a pullback in $\comma{J}{\clC}$ is computed as in $\clC$. (This can also be rephrased as a property of the functor $S$ of \autoref{def_synth}).
\end{proof}
From this, the claim in Theorem \ref{nogo} easily follows.
\section{There ain't no easy way out: so what?}

\begin{remark}
  The critical result \autoref{nogo} is phrased so that it is evident how no regular category can yield an equivalence between analytica and synthetica; the problem is the category of categories we choose to restrict our attention to: in this case, $\Pos$. Certainly, regularity is a reasonable tameness property for a category of analytica; but is it necessary to describe a living system?
\end{remark}
The situation requires an analytic (\dots pun not intended) approach: it's easy to isolate some of the various moving parts in the purported (non)equivalence between analytica and synthetica: consider the following drawing.
\begin{center}
  \begin{tikzpicture}
    \node (A/Set) {$A/\underset{\text{\ding{192}}}{\overset{\text{\ding{193}}}{\Set}}$};
    \node[right=.25cm of A/Set] (cong) {$\not\cong$};
    \node[right=1cm of A/Set] (comma) {$\underset{\text{\ding{194}}}{(J\downarrow A/\overset{\text{\ding{193}}}{\Set})}$};
    \node[right=1cm of comma] (fiber) {$\overset{\phantom{a}}{\underset{\text{\ding{195}}}{[P,A/{\Set}]}}$};
    \draw[left hook->] (fiber) -- (comma);
  \end{tikzpicture}
\end{center}
Such a representation of the situation makes it evident which parts of the equivalence we can tweak: for example, as already mentioned, our proof relies on a modified notion of $\An$, where objects are sets and \emph{functions} over $A$; what if \ding{192} we consider the bicategory $\cate{Rel}$ of relations, and the full subcategory of $A/\cate{Rel}$ given by the functional relations, as in diagram \eqref{analy_are_fun}? What if \ding{193} we replace $\Set$ with the category $\Cat$ of categories, and $A$ a (discrete or non-discrete) category? $\Cat$ is not regular (cf. \autoref{alcune_no}), nor it is the coslice $A/\Cat$. So, at least the obstruction to equivalence found in \autoref{nogo} vanishes. Alternatively, \ding{194} what if we consider a weaker universal object instead of the comma construction? Or finally \ding{195} what information can we get about the categories of analytica and synthetica from the knowledge of the fibres $\Cat(PA/\clC)$?

\medskip Let's examine each of these possibilities separately.
\begin{perspective}[Relational analytica]
  Sticking to Rosen's original idea, the category $\rAn$  of relational $A$-based analytica is defined as follows: it's the full subcategory of the (strong) coslice $A/\cate{Rel}$ on functional relations, and morphisms are relations $h : X\pto Y$ `compatible' with $A$:
  \[ \begin{tikzcd}
      & A \ar[dr, "g"]\ar[dl, "f"']& \\
      X \ar["{|}" marking, rr]&& Y
    \end{tikzcd}\]
  As interesting as it may seem, at the moment, this point of view raises more questions than it can solve; a thorough study of Rosen's theory from a relational/profunctorial point of view is the subject of current research \cite{mr_sys}, centred on a more categorically-informed account of $(M,R)$-systems. When moving from sets and functions to sets and relations, things become rather hairy because the ambient $\rAn$ is in no natural way a category but instead a bicategory; We shall then study its structure through 2-categorical types of machinery. (A blatant example of the fact that 1-dimensional niceness may fail to exist in $\rAn$ is that it lacks --even countable-- filtered colimits; this observation will play a central r\^ole in \cite{mr_sys}).
\end{perspective}
\begin{perspective}[Categorification]
  It is certainly worth exploring the following generalised setting: the category of $(2,A)$-analytica is the coslice $A/\Cat$, where $A$ is a (discrete or non-discrete) category. Now, since $\Cat$ is not regular, \autoref{nogo} does not apply; on the other hand, the two categories remain not equivalent because of the way colimits are computed in $\Pos$ and $\Cat$ (a coequaliser of posets performed in $\Cat$ yields a preorder, \emph{qua} thin category, whereas the coequaliser performed in $\Pos$ requires an additional step, a `posetal reflection' that quotients out by the equivalence relation $aRb\iff a\le b\le a$; this further quotient destroys the universal property of the $\Cat$-colimit).

  A natural alternative would be to consider \emph{adjunctions}, not equivalences; for example, since there is a natural embedding functor
  \[ i : \clC \to \comma{J}{\clC} \]
  sending an object $C\in\clC$ into its name $\name{C} : 1\to \clC$ (clearly an object of $\comma{J}{\clC}$), it would be interesting to find a `unity of opposites'
  \[
    \begin{tikzcd}
      A/\Cat \ar[r, hook, "i" description] & (J\downarrow A/\Cat).  \ar[l, shift right=.75em] \ar[l, shift left=.75em]
    \end{tikzcd}
  \]
  Unfortunately, $i$ does not preserve either products or coproducts so that it can be neither a right nor a left adjoint. We believe the question deserves attention, with an eye on the meaning of the (non)equivalences we are assessing.
\end{perspective}
\begin{perspective}[Weaker limits]
  Moving to a 2-categorical setting, it might be more natural to tweak the definition of analytic and synthetic system to form the pseudo-comma categories of commutative triangles, respectively:
  \[
  \vcenter{\xymatrix{
    &A\dtwocell<\omit>{\theta}\ar[dr]^g\ar[dl]_f& \\
    X \ar[rr]_h&& Y
  }}
  \] with $\theta : g \To hf$ an invertible natural transformation in $\Cat$; similarly, we can define the \emph{oplax} coslice category $A/\!\!/_{\!o}\Cat$ (where $\theta$ is not invertible) and the \emph{lax} coslice $A/\!\!/_{\!l}\Cat$, where instead of $\theta$ we consider transformations $\eta : hf\To g$. Such lax coslice category \cite[I.2.5]{Graya} appears to be the fiber over $A\in\Cat$ of a canonical 2--fibration $\text{src} : \Cat/\!\!/\Cat \to \Cat$ that proved to be useful in the classification of comprehension and quotient structures in categorical logic \cite{pammone_fib}
\end{perspective}
\begin{perspective}[Stick to a single fiber]
  We could study the particular case of (based or unbased) analytica where posets have good density and topological properties: this approach is tangential to the one conducted by Ehresmann and Vanbremeersch in their \cite{ehresmann1987hierarchical,ehresmann2009mens,ehresmann2019mes,ehresmann2007memory,bastiani1972categories}; in short, they consider a prestack $F : P \to \Cat$ where $P$ is a dense linear order (understood as a category of 'time instants', a natural choice for which is any interval $I\subset \bbR$ of the reals), and \emph{stacks} for the order topology. This induces nice comparison maps
  \[ F_t \to F_{t+\delta t} \to F_s \]
  for every $\delta t> 0$ and every $t \le s$ keeping track of the system's evolution globally.

  An important observation is that the fibres of the fibration $S$ of \eqref{def_synth}; $S$ resembles the \emph{fibration of presheaves}, defined as the Grothendieck fibration associated to the prestack $P\mapsto \Cat(P,\Set)$; the fibration of presheaves is a fibered topos, in the sense that every fibre is a topos and every reindexing functor is logical; albeit the category $\Cat(P,\An)$ is seldom a topos, we can still find that it is a fibered locally (finitely) presentable category: this means that
  \begin{itemize}
    \item Each fibre is a locally presentable category (the coslice of a locally presentable category is locally presentable, and by \autoref{side_rem}, the fibre is just the coslice $\Delta A / \Cat(P,\Set)$);
    \item each reindexing functor $f^* : S^{-1}Q \to S^{-1}P$ induced by $f : P \to Q$ is a right adjoint that preserves filtered colimits.
  \end{itemize}
\end{perspective}

% \section{Acknowledgements}
% The author was supported by the ESF funded Estonian IT Academy research measure (project 2014-2020.4.05.19-0001).

\bibliographystyle{plain}
\bibliography{allofthem}{}
% \putbib{allofthem}
\end{document}